\documentclass[12pt]{amsart}

\usepackage{amsmath,amssymb,amsfonts,amsthm,latexsym,graphicx,multirow}
\usepackage{hyperref}

\oddsidemargin=0.4in
\evensidemargin=0.4in
\topmargin=-0.2in
\textwidth=15cm
\textheight=23cm 

\def\A{\mathrm{A}}   \def\Aut{\mathrm{Aut}}
 
   \def\Cen{\mathbf{C}}  \def\Cos{\mathrm{Cos}}

 \def\Ga{{\it\Gamma}} \def\Gal{\mathrm{Gal}}   \def\GL{\mathrm{GL}}

\def\Nor{\mathbf{N}}

  \def\PGL{\mathrm{PGL}} \def\PGaL{\mathrm{P\Gamma L}}    \def\PSL{\mathrm{PSL}} \def\PSiL{\mathrm{P\Sigma L}}    \def\PSU{\mathrm{PSU}}

\def\SL{\mathrm{SL}}       \def\Sy{\mathrm{S}}

\newtheorem{theorem}{Theorem}[section]
\newtheorem{lemma}[theorem]{Lemma}

\theoremstyle{definition}

\newtheorem{remark}[theorem]{Remark}
\newtheorem{question}{Question}

\begin{document}

\title{An infinite family of vertex-primitive $2$-arc-transitive digraphs}

\author[Giudici]{Michael Giudici}
\address{School of Mathematics and Statistics\\University of Western Australia\\ Crawley 6009, WA\\ Australia}
\email{michael.giudici@uwa.edu.au}

\author[Li]{Cai Heng Li}
\address{Department of Mathematics, South University of Science and Technology of China\\ Shenzhen 518055, Guangdong\\ P. R. China}
\email{lich@sustc.edu.cn}

\author[Xia]{Binzhou Xia}
\address{School of Mathematics and Statistics\\University of Western Australia\\ Crawley 6009, WA\\ Australia}
\email{binzhou.xia@uwa.edu.au}

\maketitle

\begin{abstract}
We solve the long-standing existence problem of vertex-primitive $2$-arc-transitive digraphs by constructing an infinite family of such digraphs.

\textit{Key words:} digraphs; vertex-primitive; $2$-arc-transitive

\textit{MSC2010:} 05C20, 05C25
\end{abstract}

\section{Introduction}

A digraph (directed graph) $\Ga$ is a pair $(V,\rightarrow)$ with a set $V$ (of vertices) and an anti-symmetric irreflexive binary relation $\rightarrow$ on $V$. For a non-negative integer $s$, an \emph{$s$-arc} in $\Ga$ is a sequence $v_0,v_1,\dots,v_s$ of vertices with $v_i\rightarrow v_{i+1}$ for each $i=0,\dots,s-1$. A $1$-arc is also simply called an \emph{arc}. We say $\Ga$ is \emph{$s$-arc-transitive} if the group of all automorphisms (permutations on $V$ that preserve the relation $\rightarrow$) of $\Ga$ acts transitively on the set of $s$-arcs. In sharp contrast with the situation for undirected graphs, where it is shown by Weiss~\cite{Weiss1981} that finite undirected graphs of valency at least $3$ can only be $s$-arc-transitive for $s\leqslant7$, there are infinite families of $s$-arc-transitive digraphs with unbounded $s$ other than directed cycles. Constructions for such families of digraphs were initiated by Praeger in 1989~\cite{Praeger1989} and have stimulated a lot of research~\cite{CPW1993,CLP1995,Evans1997,MMSZ2002,MS2001,MS2004}.

A permutation group $G$ on a set $\Omega$ is said to be \emph{primitive} if $G$ does not preserve any nontrivial partition of $\Omega$. We say a digraph is \emph{vertex-primitive} if its automorphism group is primitive on the vertex set. Although various constructions of $s$-arc-transitive digraphs are known, no vertex-primitive $s$-arc-transitive digraph with $s\geqslant2$ has been found until now. Analysis of Praeger~\cite{Praeger1989} has shown that the most appropriate case to consider is the case where the automorphism group is an almost simple group. Here an \emph{almost simple group} is a finite group whose socle (the product of the minimal normal subgroups) is nonabelian simple. Later in her survey paper~\cite{Praeger1990}, Praeger said ``no such examples have yet been found despite considerable effort by several people" and thence asked the following question~\cite[Question~5.9]{Praeger1990}:

\begin{question}
Is there a finite $2$-arc-transitive directed graph such that the automorphism group is primitive on vertices and is an almost simple group?
\end{question}

In the present paper, we answer this nearly 30 year old question in the affirmative by constructing an infinite family of vertex-primitive $2$-arc-transitive digraphs that admit three-dimensional projective special linear groups as a group of automorphisms.

A digraph $(V,\rightarrow)$ is said to be $k$-regular if both the set $\{u\in V\mid u\rightarrow v\}$ of in-neighbors of $v$ and the set $\{w\in V\mid v\rightarrow w\}$ of out-neighbors of $v$ have size $k$ for all $v\in V$. Given a group $G$, a subgroup $H$ of $G$ and an element $g$ of $G$ such that $g^{-1}\notin HgH$, there is a standard construction of a digraph $\Cos(G,H,g)$ whose vertices are the right cosets of $H$ in $G$ and two vertices satisfy $Hx\rightarrow Hy$ if and only if $yx^{-1}\in HgH$ (basic properties of this digraph can be found in Section~\ref{sec1}). Our main result is as follows. Recall the field and graph automorphisms of projective special linear groups defined in~\cite[\S~1.7.1]{BHR2013}.

\begin{theorem}\label{exa-PSL(3,p^2)}
Let $p>3$ be a prime number such that $p\equiv\pm2\pmod{5}$, $\varphi$ be the projection from $\GL_3(p^2)$ to $\PGL_3(p^2)$, $G=\PSL_3(p^2)<\PGL_3(p^2)$, and $a,b\in\mathbb{F}_{p^2}$ such that $a^2+a-1=0$ and $b^2+b+1=0$. Take
$$
g=
\begin{pmatrix}
b^{-1} & 0 & 1\\
0 & a-b & 0\\
1 & 0 & -b
\end{pmatrix}
^\varphi,\quad x=
\begin{pmatrix}
a^{-1} & 1 & -a\\
-1 & a & -a^{-1}\\
-a & a^{-1} & 1
\end{pmatrix}
^\varphi,\quad y=
\begin{pmatrix}
-b^{-1} & 0 & 0\\
0 & 0 & 1 \\
0 & b & 0
\end{pmatrix}
^\varphi
$$
and $H=\langle x,y\rangle$. Then $H\cong\A_6$, $g$ is an element of $G$ such that $g^{-1}\notin HgH$, and $\Cos(G,H,g)$ is a vertex-primitive $2$-arc-transitive $6$-regular digraph with automorphism group
$$
A=
\begin{cases}
\PSL_3(p^2){:}\langle\gamma\phi\rangle\quad&\text{if $p\equiv1\pmod{3}$}\\
\PSL_3(p^2){:}\langle\phi\rangle=\PSiL_3(p^2)\quad&\text{if $p\equiv2\pmod{3}$}
\end{cases}
$$
where $\gamma$ and $\phi$ are field and graph automorphisms of $\PSL_3(p^2)$ of order two.
\end{theorem}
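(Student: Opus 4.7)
The plan is to verify the assertions of the theorem in sequence, working with the coset-digraph formalism for $\Cos(G,H,g)$ recalled in Section~\ref{sec1}.

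First I would establish the isomorphism $H\cong \A_6$. The hypothesis $p\equiv\pm 2\pmod{5}$ ensures that $5$ is a non-square in $\mathbb{F}_p$ but a square in $\mathbb{F}_{p^2}$, so $T^2+T-1$ splits over $\mathbb{F}_{p^2}$ and $a$ is genuinely quadratic; likewise $b$ is a primitive cube root of unity available in $\mathbb{F}_{p^2}$. From the characteristic polynomials of the displayed lifts, the projective orders of $x$ and $y$ should come out as $5$ and $4$ respectively, after which a two-generator presentation of $\A_6$ on an element of order~$5$ and an element of order~$4$ can be verified by matrix multiplication. Equivalently, one can match the triple $(\mathrm{tr}(x),\mathrm{tr}(y),\mathrm{tr}(xy))$ with the data for the unique irreducible $3$-dimensional projective representation of $3.\A_6$ over $\mathbb{F}_{p^2}$. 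The condition $g\in G$ amounts to the determinant of the displayed lift being a cube in $\mathbb{F}_{p^2}^\times$; this is a direct computation using $a^2=1-a$ and $b^2+b+1=0$. Vertex-primitivity is then immediate from Mitchell's classical classification of subgroups of $\PSL_3(q)$: for $p>3$ and $p\equiv\pm2\pmod{5}$, such an $\A_6$ is maximal in $\PSL_3(p^2)$.

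Next I would pin down the valency and the directedness. The out-valency of $\Cos(G,H,g)$ is $[H:H\cap H^g]$, so I need $|H\cap H^g|=60$ and expect $H\cap H^g\cong\A_5$; I would identify this $\A_5$ explicitly by conjugating the generators of $H$ by $g$ and intersecting with~$H$. For $g^{-1}\notin HgH$ --- the condition making $\rightarrow$ antisymmetric --- I would exhibit an $H$-$H$-bi-invariant quantity separating $g$ from $g^{-1}$ (for instance by comparing traces of suitably chosen conjugates $h_1 g h_2$ and $h_1 g^{-1} h_2$), or enumerate the double coset $HgH$ of size $|H|^2/|H\cap H^g|=6\cdot 360$ and check directly.

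Finally, arc-transitivity is automatic once $[H:H\cap H^g]=6$ equals the valency, so by the standard criterion for $\Cos(G,H,g)$ the $2$-arc-transitivity reduces to transitivity of the arc-stabiliser $H\cap H^g\cong\A_5$ on the $5$ out-neighbours of $Hg$ other than $H$; this should be the natural degree-$5$ action of $\A_5$, which I would verify by inspection of the coset action. For the full automorphism group, I would show by direct conjugation of the generators that $\gamma\phi$ (when $p\equiv 1\pmod{3}$) or $\phi$ (when $p\equiv 2\pmod{3}$) normalises $H$ and preserves the double coset $HgH$, yielding $A\leq\Aut(\Ga)$; the matching upper bound would then follow from the primitivity of $A$ on $G/H$ together with an inspection of the possible almost simple overgroups with socle $\PSL_3(p^2)$. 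The principal obstacle is the $2$-arc-transitivity step, which requires a precise identification of how the stabiliser $\A_5$ permutes the out-neighbours of $Hg$ rather than a mere counting argument, together with the verification that no further automorphisms of $\Ga$ lie outside~$A$.
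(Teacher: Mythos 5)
Your outline follows the paper's broad shape (identify $H\cong\A_6$ via a presentation, get maximality from the subgroup structure of $\PSL_3(p^2)$, compute the valency from $H\cap g^{-1}Hg\cong\A_5$, then handle $2$-arc-transitivity and the automorphism group), but there are two genuine gaps at the heart of the argument. First, your reduction of $2$-arc-transitivity is wrong as stated: you ask for the arc-stabiliser $\A_5$ to act as the \emph{natural degree-$5$ action} on ``the $5$ out-neighbours of $Hg$ other than $H$.'' In a digraph $H$ is an in-neighbour, not an out-neighbour, of $Hg$, so the arc-stabiliser must be transitive on all \emph{six} out-neighbours of $Hg$; the relevant action of $\A_5$ is its transitive degree-$6$ action with point stabiliser $\D_{10}$, not the degree-$5$ one. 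Equivalently (Lemma~\ref{CosetDigraph}(e)) one must prove the factorization $H=(H\cap g^{-1}Hg)(gHg^{-1}\cap H)$, i.e.\ that the two $\A_5$'s are distinct and meet in a subgroup of order exactly $10$. The paper does this by exhibiting $x$ in both factors (since $gx=xg$), showing $z\in g^{-1}Hg$ but $w=zgz^{-1}g^{-1}$ of order $4$ cannot lie in $gHg^{-1}\cap H$, and then using the fact that two distinct $\A_5$'s in $\A_6$ sharing an element of order $5$ intersect in order $10$. Checking the degree-$5$ action instead would verify the wrong condition.

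Second, you have no workable mechanism for $g^{-1}\notin HgH$ across the whole infinite family: enumerating a double coset of size $2160$ or hunting for a bi-invariant separating function is not something you can do uniformly in $p$. The paper's Lemma~\ref{Directed} is the missing idea: once the nontrivial factorization above is established and $g\notin\Nor_G(H)$, the condition $g^{-1}\notin HgH$ follows automatically by pure group theory, so no separate computation is needed. Two further points to repair: the projective order of $y$ is $2$, not $4$ (its lift squares to the scalar $b$), and the paper's presentation is $X^5=Y^2=(XY)^5=(XYX)^4=1$; and for the automorphism group your sketch does not address why $A$ is not all of $\PSL_3(p^2){:}\langle\gamma,\phi\rangle$ --- the point, which requires the counting argument showing $H\cap g^{-1}Hg$ lies in exactly two conjugates of $H$, is that $\Nor_{\PSL_3(p^2):\langle\gamma,\phi\rangle}(H)\cong\PGaL_2(9)$ swaps the two $H$-orbits of length $6$ (i.e.\ the in- and out-neighbourhoods), whereas $\Nor_X(H)\cong\Sy_6$ fixes each because the two $\A_5$'s are not conjugate in $\Sy_6$.
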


The digraphs $\Cos(G,H,g)$ in Theorem~\ref{exa-PSL(3,p^2)} have vertex stabilizer $H\cong\A_6$ and arc stabilizer $H\cap g^{-1}Hg\cong\A_5$ (see Lemma~\ref{lem-4}). Examples of $2$-arc-transitive digraphs with such vertex stabilizer and arc stabilizer are constructed in~\cite{CLP1995}, but as pointed out in~\cite[Page~76]{CLP1995}, they are not vertex-primitive.

It is worth mentioning that, using the digraphs in Theorem~\ref{exa-PSL(3,p^2)}, one can immediately construct $2$-arc-transitive digraphs that are vertex-primitive of product action type using the construction in~\cite[Proposition~4.2]{Praeger1989}.

We also remark that the digraphs $\Cos(G,H,g)$ in Theorem~\ref{exa-PSL(3,p^2)} is not $3$-arc-transitive, so we ask the following question.

\begin{question}
Is there an upper bound on $s$ for vertex-primitive $s$-arc-transitive digraphs that are not directed cycles?
\end{question}

\section{Preliminaries}\label{sec1}

As mentioned in the introduction, there is a general construction for arc-transitive digraphs. We state the construction below along with its basic properties. The proof of these properties is elementary and the reader may consult~\cite{CLP1995}.

Let $G$ be a group, $H$ be a subgroup of $G$, $V$ be the set of right cosets of $H$ in $G$ and $g$ be an element of $G\setminus H$ such that $g^{-1}\notin HgH$. Define a binary relation $\rightarrow$ on $V$ by letting $Hx\rightarrow Hy$ if and only if $yx^{-1}\in HgH$ for any $x,y\in G$. Then $(V,\rightarrow)$ is a digraph, denoted by $\Cos(G,H,g)$. Right multiplication gives an action $R_H$ of $G$ on $V$ which preserves the relation $\rightarrow$, so that $R_H(G)$ is a group of automorphisms of $\Cos(G,H,g)$. Recall that a digraph is said to be connected if and only if its underlying graph is connected. A vertex-primitive digraph is necessarily connected, for otherwise its connected components would form a partition of the vertex set that is invariant under digraph automorphisms.

\begin{lemma}\label{CosetDigraph}
In the above notation, the following hold.
\begin{itemize}
\item[(a)] $\Cos(G,H,g)$ is $|H{:}H\cap g^{-1}Hg|$-regular.
\item[(b)] $\Cos(G,H,g)$ is connected if and only if $\langle H,g\rangle=G$.
\item[(c)] $R_H(G)$ is primitive on $V$ if and only if $H$ is maximal in $G$.
\item[(d)] $R_H(G)$ acts transitively on the set of arcs of $\Cos(G,H,g)$.
\item[(e)] $R_H(G)$ acts transitively on the set of $2$-arcs of $\Cos(G,H,g)$ if and only if
\begin{equation}\label{eq9}
H=(H\cap g^{-1}Hg)(gHg^{-1}\cap H).
\end{equation}
\end{itemize}
\end{lemma}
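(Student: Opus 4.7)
The plan is to verify (a)--(e) by direct coset-level computations, exploiting that $R_H(G)$ is vertex-transitive with point stabilizer $R_H(H)$, so every global statement about $R_H(G)$ reduces to a local one at the base vertex $v_0 := H$.

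Parts (a)--(d) are short. For (a), the out-neighbors of $v_0$ are the right $H$-cosets contained in $HgH$, of which there are $|H : H \cap g^{-1}Hg|$ by the standard double-coset identity $|HgH| = |H|^2/|H \cap g^{-1}Hg|$; vertex-transitivity propagates this count to every vertex, and the analogous calculation with $g^{-1}$ handles in-valency. For (b), the neighbors of $v_0$ in the underlying graph are exactly the cosets $Hx$ with $x \in HgH \cup Hg^{-1}H$, and a walk-closure argument shows that the connected component of $v_0$ consists of the cosets $Hx$ with $x \in \langle H,g\rangle$, giving the stated criterion. Part (c) is the classical characterization of primitive actions on $G/H$ via maximality of the point-stabilizer. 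For (d), vertex-transitivity reduces the claim to showing that $R_H(H)$ is transitive on the out-neighbors $\{Hgh : h \in H\}$ of $v_0$, which is immediate because $R_H(h')$ sends $Hg$ to $Hgh'$.

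Part (e) is where the real work lies. After using (d) to reduce 2-arc-transitivity to transitivity of the arc-stabilizer of $(v_0, Hg)$ on the out-neighbors of $Hg$, I would first identify this stabilizer as $R_H(H \cap g^{-1}Hg)$, since $R_H(x)$ fixes $v_0$ iff $x \in H$ and then fixes $Hg$ iff $gxg^{-1}\in H$. Next, the out-neighbors of $Hg$ are $\{Hghg : h \in H\}$, from $Hg \to Hz$ iff $z \in HgHg$ together with $H\cdot h_1 g h_2 g = H g h_2 g$. Writing a generic arc-stabilizer element as $x = g^{-1}kg$ with $k \in gHg^{-1}\cap H$, a short computation shows that $R_H(x)$ maps the distinguished out-neighbor $Hg^2$ to $Hgkg$, so its orbit is $\{Hgkg : k \in gHg^{-1}\cap H\}$. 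Comparing cosets via the equivalence $Hghg = Hgkg \Longleftrightarrow hk^{-1}\in H\cap g^{-1}Hg$ turns transitivity of the action into the set-product equality $H = (H\cap g^{-1}Hg)(gHg^{-1}\cap H)$, which is exactly~\eqref{eq9}. The main obstacle here is purely notational: keeping the two conjugates $g^{-1}Hg$ and $gHg^{-1}$ straight while tracking how right multiplication by $x = g^{-1}kg$ reshuffles the parameter $h$; once the bookkeeping is done cleanly, (e) drops out.
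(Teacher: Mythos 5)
Your proposal is correct and follows essentially the same route as the paper: parts (a)--(d) are treated as routine, and for (e) you reduce $2$-arc-transitivity, via arc-transitivity, to the transitivity of an arc-stabilizer on the out-neighbours of the arc's head, then translate that into the factorization $H=(H\cap g^{-1}Hg)(gHg^{-1}\cap H)$. The only cosmetic difference is that the paper anchors the computation at the $2$-arc $Hg^{-1}\rightarrow H\rightarrow Hg$ and invokes the standard stabilizer-factorization criterion from Dixon--Mortimer, whereas you anchor at $(H,Hg)$ and verify the same equivalence by direct coset bookkeeping.
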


\begin{proof}
We only prove part~(e) as the proof of the other parts is folklore. Let $u=Hg^{-1}$, $v=H$ and $w=Hg$ be three vertices of $\Cos(G,H,g)$. Then $u\rightarrow v\rightarrow w$ since $g\in HgH$. Clearly, $G$ acts on the vertex set of $\Cos(G,H,g)$ by right multiplication with the vertex stabilizer $G_v=H$. It follows that the arc stabilizer
\begin{eqnarray*}
G_{vw}&=&H_w=\{h\mid h\in H,\ Hgh=Hg\}\\
&=&\{h\mid h\in H,\ h\in g^{-1}Hg\}=H\cap g^{-1}Hg.
\end{eqnarray*}
In the same vein, $G_{uv}=gHg^{-1}\cap H$. Now as $G$ already acts transitively on the set of arcs, $G$ is transitive on the set of $2$-arcs of $\Cos(G,H,g)$ if and only if $G_{uv}$ acts transitively on the set of out-neighbors of $v$, which is equivalent to $G_v=G_{vw}G_{uv}$ by~\cite[Exercise~1.4.1]{DM1996}. Thereby we deduce that $G$ is transitive on the set of $2$-arcs of $\Cos(G,H,g)$ if and only if
\begin{equation}
H=G_v=(H\cap g^{-1}Hg)(gHg^{-1}\cap H),
\end{equation}
as part~(e) asserts.
\end{proof}

An expression of a group $G$ into the product of two subgroups $H$ and $K$ of $G$ is called a \emph{factorization} of $G$, and is called a \emph{nontrivial factorization} of $G$ if in addition $H$ and $K$ are both proper subgroups of $G$. The following lemma lists several equivalent conditions for a group factorization, whose proof is fairly easy and so is omitted.

\begin{lemma}\label{Factorization}
Let $H,K$ be subgroups of $G$. Then the following are equivalent.
\begin{itemize}
\item[(a)] $G=HK$.
\item[(b)] $G=KH$.
\item[(c)] $G=(x^{-1}Hx)(y^{-1}Ky)$ for any $x,y\in G$.
\item[(d)] $|H\cap K||G|=|H||K|$.
\item[(e)] $H$ acts transitively on the set of right cosets of $K$ in $G$ by right multiplication.
\item[(f)] $K$ acts transitively on the set of right cosets of $H$ in $G$ by right multiplication.
\end{itemize}
\end{lemma}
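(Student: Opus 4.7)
The plan is to prove the six conditions equivalent using only elementary manipulations of cosets, since all of the content reduces to how the single set product $HK$ sits inside $G$.

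To get (a) $\Leftrightarrow$ (b), I would take inverses: if $G=HK$ then $G=G^{-1}=(HK)^{-1}=K^{-1}H^{-1}=KH$ because $H$ and $K$, being subgroups, are closed under inversion, and the converse is identical. For (a) $\Leftrightarrow$ (d), I would invoke the standard product-counting identity $|HK|=|H||K|/|H\cap K|$, obtained by noting that in the multiplication map $H\times K\to HK$, the fiber over a point $hk$ is in bijection with $H\cap K$ via $(h',k')\mapsto h^{-1}h'=k(k')^{-1}$. Then $G=HK$ iff $|G|=|HK|$, which rearranges exactly to (d).

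For (a) $\Leftrightarrow$ (c), one direction is trivial on setting $x=y=1$. For the converse, given arbitrary $x,y\in G$, I would write $xy^{-1}=h_0k_0$ with $h_0\in H$, $k_0\in K$ (using $G=HK$); then $H(xy^{-1})K=Hh_0k_0K=HK=G$, and regrouping gives $(x^{-1}Hx)(y^{-1}Ky)=x^{-1}\bigl(H(xy^{-1})K\bigr)y=x^{-1}Gy=G$.

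Finally, for (b) $\Leftrightarrow$ (e), and symmetrically (a) $\Leftrightarrow$ (f), I would compute the orbit of the trivial coset $K$ under the right-multiplication action of $H$ on right cosets of $K$: this orbit is $\{Kh\mid h\in H\}$, whose union as a subset of $G$ is exactly $KH$. Since distinct right cosets of $K$ in $G$ are disjoint, transitivity is equivalent to this orbit being the whole coset space, i.e.\ to $KH=G$, which is (b); the parallel argument with the roles of $H$ and $K$ swapped gives (a) $\Leftrightarrow$ (f). There is no real obstacle here; the only subtlety is to keep track of the direction of the right-multiplication action, which is what pairs (e) naturally with (b) and (f) naturally with (a), after which the (a) $\Leftrightarrow$ (b) step completes the cycle.
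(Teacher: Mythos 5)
Your proof is correct and is exactly the standard argument the paper has in mind: the paper explicitly omits its own proof of this lemma as ``fairly easy,'' and your chain of equivalences (inversion for (a)$\Leftrightarrow$(b), the product formula $|HK|=|H||K|/|H\cap K|$ for (d), conjugation-regrouping for (c), and identifying the orbit of the trivial coset with $KH$ resp.\ $HK$ for (e) and (f)) is precisely the routine verification being alluded to. Nothing is missing; the only implicit hypothesis is finiteness of $G$ in the step ``$|G|=|HK|$ iff $G=HK$,'' which is harmless since all groups in the paper are finite.
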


We have seen in Lemma~\ref{CosetDigraph}(e) that the transitivity of $R_H(G)$ on the set of $2$-arcs is characterized by the group factorization~\eqref{eq9}. In the next lemma we shall see that if such a factorization is nontrivial then it already implies the condition $g^{-1}\notin HgH$ that is needed in the construction of the digraph $\Cos(G,H,g)$. Note that the group factorization~\eqref{eq9} is nontrivial if and only if $g\notin\Nor_G(H)$. We also note that if the factorization~\eqref{eq9} is nontrivial, then $H\cap g^{-1}Hg$ and $gHg^{-1}\cap H$ cannot be conjugate in $H$ by Lemma~\ref{Factorization}(c).

\begin{lemma}\label{Directed}
Let $G$ be a group, $H$ be a subgroup of $G$ and $g$ be an element of $G$. If~\eqref{eq9} holds and $g\notin\Nor_G(H)$, then $g^{-1}\notin HgH$.
\end{lemma}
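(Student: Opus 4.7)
The plan is to argue by contradiction: assume $g^{-1}\in HgH$, say $g^{-1}=h_1 g h_2$ with $h_1,h_2\in H$. Rearranging gives $gh_1g=h_2^{-1}$ and $g^{-1}h_2^{-1}g^{-1}=h_1$, so the two auxiliary sets $X:=gHg\cap H$ and $Y:=g^{-1}Hg^{-1}\cap H$ are both non-empty, containing $h_2^{-1}$ and $h_1$ respectively.

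Set $A=H\cap g^{-1}Hg$ and $B=H\cap gHg^{-1}$, the two subgroups appearing in~\eqref{eq9}. The key observation is that $X$ and $Y$ are single cosets of these subgroups inside $H$: precisely, $X=x_0A$ for any $x_0\in X$, and $Y=y_0B$ for any $y_0\in Y$. Both are verified by direct multiplication---for $a\in A$, the inclusion $ga\in Hg$ forces $x_0a\in gHg\cap H=X$, and the reverse inclusion $x_0^{-1}X\subseteq A$ is equally routine. Combined with the tautology $X^{-1}=Y$, and taking $x_0=h_2^{-1}$ and $y_0=h_1$, one obtains
$$
Ah_2=(h_2^{-1}A)^{-1}=X^{-1}=Y=h_1B,
$$
whence $A=h_1 B h_2^{-1}$.

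Substituting into the hypothesis $H=AB$ produces $H=h_1(Bh_2^{-1}B)$, and multiplying on the left by $h_1^{-1}\in H$ yields $H=Bh_2^{-1}B$. Since $1\in H$, there exist $b_1,b_2\in B$ with $b_1h_2^{-1}b_2=1$, so $h_2\in B\cdot B=B$; hence $Bh_2^{-1}B=B$, giving $B=H$. The symmetric substitution $B=h_1^{-1}Ah_2$ into $H=AB$ gives instead $H=Ah_1^{-1}A$, and the same reasoning yields $A=H$. Finally $A=H$ gives $H\subseteq g^{-1}Hg$ and $B=H$ gives $H\subseteq gHg^{-1}$; together these force $gHg^{-1}=H$, i.e.\ $g\in\Nor_G(H)$, contradicting the hypothesis.

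The main obstacle is identifying the right framework: recognizing that the transfer sets $X=gHg\cap H$ and $Y=g^{-1}Hg^{-1}\cap H$ are single cosets of $A$ and $B$ in $H$, so that the $2$-arc-transitivity factorization $H=AB$ can be played off against the inversion identity $X^{-1}=Y$ to collapse both $A$ and $B$ to $H$. Once that coset description is in hand, the remainder is a brief double-coset manipulation.
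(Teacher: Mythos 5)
Your argument is correct: the verification that $X=gHg\cap H$ is a single left coset of $A=H\cap g^{-1}Hg$ (and $Y=g^{-1}Hg^{-1}\cap H$ a single left coset of $B=H\cap gHg^{-1}$) checks out, the identity $X^{-1}=Y$ then yields $A=h_1Bh_2^{-1}$, and substituting into $H=AB$ does collapse both factors to $H$. The paper's proof runs on the same engine---assume $g^{-1}=h_1gh_2$, feed this back into the factorization~\eqref{eq9}, and force $g\in\Nor_G(H)$---but it organizes the computation differently: instead of your coset description of the transfer sets, it uses the equivalences of Lemma~\ref{Factorization} (reversal and conjugation-invariance of factorizations) to rewrite $H=BA$ as $H=B\,(H\cap gHgh_1)$ and then right-multiplies by $h_1^{-1}h_2$ to identify the second factor with $B$, concluding $H=gHg^{-1}\cap H$. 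The underlying identity is the same in both (your $A=h_1Bh_2^{-1}$ is exactly what the paper's substitutions $g^{-1}Hg=h_1gHg$ and $gHg^{-1}=gHgh_2$ encode), so neither is more general in substance; but your version is arguably more transparent, since it isolates the structural fact that $gHg\cap H$ is a coset, and it has the minor advantage of deriving \emph{both} $A=H$ and $B=H$, so the conclusion $gHg^{-1}=H$ needs no appeal to finiteness, whereas the paper's endpoint $H=gHg^{-1}\cap H$ gives a priori only the containment $H\subseteq gHg^{-1}$ (harmless here, as all groups in play are finite, or by running the symmetric computation). The paper's version, in exchange, reuses machinery (Lemma~\ref{Factorization}) already set up for other purposes and avoids introducing the auxiliary sets $X$ and $Y$.
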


\begin{proof}
Suppose that~\eqref{eq9} holds, $g\notin\Nor_G(H)$ and $g^{-1}\in HgH$. Then $g^{-1}=h_1gh_2$ for some $h_1,h_2\in H$, so that $H\cap g^{-1}Hg=H\cap h_1gHg$ and $gHg^{-1}\cap H=gHgh_2\cap H$. Appealing to Lemma~\ref{Factorization} we then deduce from~\eqref{eq9} that
\begin{eqnarray*}
H&=&(gHg^{-1}\cap H)(H\cap g^{-1}Hg)=(gHg^{-1}\cap H)h_1^{-1}(H\cap g^{-1}Hg)h_1\\
&=&(gHg^{-1}\cap H)h_1^{-1}(H\cap h_1gHg)h_1=(gHg^{-1}\cap H)(H\cap gHgh_1).
\end{eqnarray*}
Hence
\begin{eqnarray*}
H&=&Hh_1^{-1}h_2=(gHg^{-1}\cap H)(H\cap gHgh_1)h_1^{-1}h_2\\
&=&(gHg^{-1}\cap H)(H\cap gHgh_2)=(gHg^{-1}\cap H)(gHg^{-1}\cap H)=gHg^{-1}\cap H,
\end{eqnarray*}
which implies $g\in\Nor_G(H)$, a contradiction. This proves the lemma.
\end{proof}

Let $p$ be an odd prime number. Recall the Legendre symbol $\left(\frac{\cdot}{p}\right)$ defined by
$$
\left(\frac{n}{p}\right)=
\begin{cases}
1\quad&\text{if $n$ is a square in $\mathbb{F}_p$}\\
-1\quad&\text{if $n$ is a non-square in $\mathbb{F}_p$}
\end{cases}
$$
for any integer $n$ coprime to $p$. If $q$ is also an odd prime number, then the quadratic reciprocity says that
$$
\left(\frac{q}{p}\right)\left(\frac{p}{q}\right)=(-1)^{\frac{p-1}{2}\cdot\frac{q-1}{2}}.
$$

\section{Proof of Theorem~\ref{exa-PSL(3,p^2)}}

Throughout this section, let $p,\varphi,G,a,b,x,y,H,g$ be as defined in Theorem~\ref{exa-PSL(3,p^2)},
$$
z=
\begin{pmatrix}
0 & 0 & 1\\
-1 & 0 & 0\\
0 & -1 & 0
\end{pmatrix}
^\varphi
$$
and $w=zgz^{-1}g^{-1}$. From the definition of $a$ and $b$ we derive
\begin{equation}\label{eq1}
(a-b)(a+b+1)=(a^2+a)-(b^2+b)=1-(-1)=2.
\end{equation}
Thus
\begin{equation}\label{eq2}
g^{-1}=
\begin{pmatrix}
b & 0 & 1\\
0 & a+b+1 & 0\\
1 & 0 & -b^{-1}
\end{pmatrix}
^\varphi
\end{equation}
since
\begin{eqnarray*}
&&
\begin{pmatrix}
b^{-1} & 0 & 1\\
0 & a-b & 0\\
1 & 0 & -b
\end{pmatrix}
\begin{pmatrix}
b & 0 & 1\\
0 & a+b+1 & 0\\
1 & 0 & -b^{-1}
\end{pmatrix}
\\&=&
\begin{pmatrix}
2 & 0 & 0\\
0 & (a-b)(a+b+1) & 0\\
0 & 0 & 2
\end{pmatrix}\\
&=&
\begin{pmatrix}
2 & 0 & 0\\
0 & 2 & 0\\
0 & 0 & 2
\end{pmatrix}.
\end{eqnarray*}
By virtue of the equalities $a^2+a-1=0$ and $b^2+b+1=0$ we can write each element of $\mathbb{Z}[a,a^{-1},b,b^{-1}]$ as a linear combination of $ab,a,b$ and $1$ with coefficients in $\mathbb{Z}$. In this way, we have
\begin{equation}\label{eq4}
x^2=
\begin{pmatrix}
2 & 2a & -2a-2\\
-2a & -2a-2 & -2\\
-2a-2 & 2 & -2a
\end{pmatrix}
^\varphi=
\begin{pmatrix}
-1 & -a & a+1\\
a & a+1 & 1\\
a+1 & -1 & a
\end{pmatrix}
^\varphi,
\end{equation}
\begin{equation}\label{eq5}
x^2y=
\begin{pmatrix}
b+1 & -ab-b & a\\
-ab-a & -b & -a-1\\
-ab-a-b-1 & -ab & 1
\end{pmatrix}
^\varphi,
\end{equation}
\begin{equation}\label{eq6}
yx=
\begin{pmatrix}
ab+a+b+1 & b+1 & -ab-a\\
-a & a+1 & 1\\
-b & ab & -ab-b
\end{pmatrix}
^\varphi,
\end{equation}
and
\begin{eqnarray}\label{eq7}
xyx&=&
\begin{pmatrix}
2ab+2b+2 & 2ab+2a+2 & 0\\
-2 & -2b & 2ab+2a+2b\\
-2b-2 & 2 & -2ab+2
\end{pmatrix}
^\varphi\\\nonumber
&=&
\begin{pmatrix}
ab+b+1 & ab+a+1 & 0\\
-1 & -b & ab+a+b\\
-b-1 & 1 & -ab+1
\end{pmatrix}
^\varphi.
\end{eqnarray}

\begin{lemma}\label{lem-2}
$g$ is an element of $G$.
\end{lemma}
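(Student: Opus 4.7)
The plan is to verify that the determinant of the matrix
$$M = \begin{pmatrix} b^{-1} & 0 & 1 \\ 0 & a-b & 0 \\ 1 & 0 & -b \end{pmatrix}$$
representing $g$ is a nonzero cube in $\mathbb{F}_{p^2}^*$. This is equivalent to $g \in \PSL_3(p^2) = G$: if $\lambda^3 = \det M$, then $\lambda^{-1} M \in \SL_3(p^2)$ maps under $\varphi$ to $g$. Expanding along the second row and invoking $(a-b)(a+b+1)=2$ from \eqref{eq1} gives $\det M = -2(a-b)$, so the task reduces to showing that $-2(a-b)$ is a cube in $\mathbb{F}_{p^2}^*$. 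I would split on the value of $p \bmod 3$.

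When $p \equiv 1 \pmod 3$, the primitive cube root $b$ lies in $\mathbb{F}_p$ while $a$ does not (the hypothesis $p \equiv \pm 2 \pmod 5$ makes $5$ a non-residue), so the Frobenius $\sigma \colon x \mapsto x^p$ fixes $b$ and sends $a$ to $-1-a$. Using \eqref{eq1} one computes the norm $(a-b)^{p+1} = -(a-b)(a+b+1) = -2$; since $3 \mid p+2$, the element $(a-b)^{(p+2)/3}$ is an explicit cube root of $-2(a-b)$. When $p \equiv 2 \pmod 3$, both $a$ and $b$ lie outside $\mathbb{F}_p$ and $\sigma$ sends each to the other root of its minimal polynomial, so $\sigma(a-b) = -(a-b)$ and hence $(a-b)^{p-1} = -1$. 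Because $p$ is odd with $p \equiv 2 \pmod 3$ we have $p \equiv 5 \pmod 6$, making $(p+1)/3$ even, so
$$(a-b)^{(p^2-1)/3} = \bigl((a-b)^{p-1}\bigr)^{(p+1)/3} = (-1)^{(p+1)/3} = 1,$$
which shows $a-b$ is a cube. In the same regime $-2 \in \mathbb{F}_p^*$ is automatically a cube in $\mathbb{F}_p^* \subset \mathbb{F}_{p^2}^*$ because $3 \nmid p-1$ forces cubing to be bijective on $\mathbb{F}_p^*$, and so the product $-2(a-b)$ is a cube.

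The main obstacle is the case $p \equiv 2 \pmod 3$: the norm $(a-b)^{p+1} = -(a-b)^2$ is no longer a field constant, so one cannot exhibit a cube root of $-2(a-b)$ as a simple power of $a-b$ in the direct manner that works when $p \equiv 1 \pmod 3$. The workaround is to argue existence of a cube root indirectly by combining the parity of $(p+1)/3$ with the automatic cube-bijectivity on $\mathbb{F}_p^*$, rather than producing the cube root explicitly. The remaining ingredients—the determinant expansion along the middle row and the description of $\sigma$ on $a$ and $b$—are entirely routine.
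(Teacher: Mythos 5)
Your proposal is correct and follows essentially the same route as the paper: reduce membership in $\PSL_3(p^2)$ to the determinant $-2(a-b)$ being a cube in $\mathbb{F}_{p^2}^*$, determine the Frobenius action on $a$ and $b$ from the quadratic-residue conditions forced by $p\equiv\pm2\pmod 5$ and $p\bmod 3$, and split into the two congruence cases. The only differences are cosmetic: in the case $p\equiv1\pmod 3$ you exhibit the explicit cube root $(a-b)^{(p+2)/3}$ where the paper instead verifies $d^{(p^2-1)/3}=1$ from $d^{p+1}=-8$, and you read off the rationality of $b$ from its being a primitive cube root of unity rather than via quadratic reciprocity for $-3$.
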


\begin{proof}
Note that $g\in G$ if and only if
$$
d:=
\begin{vmatrix}
b^{-1} & 0 & 1\\
0 & a-b & 0\\
1 & 0 & -b
\end{vmatrix}
$$
is a nonzero cube in $\mathbb{F}_{p^2}$, which is equivalent to $d^{(p^2-1)/3}=1$. Write $\alpha=-(2a+1)$ and $\beta=2b+1$. Then
$$
d=-(a-b)-(a-b)=-2(a-b)=\alpha+\beta,
$$
$\alpha^2=4a^2+4a+1=5$ and $\beta^2=4b^2+4b+1=-3$. Since $p\equiv\pm2\pmod{5}$, we have
$$
\left(\frac{5}{p}\right)=(-1)^{\frac{5-1}{2}\cdot\frac{p-1}{2}}\left(\frac{p}{5}\right)=\left(\frac{p}{5}\right)=\left(\frac{\pm2}{5}\right)=-1,
$$
so that $5$ is a non-square in $\mathbb{F}_p$. It follows that the two square roots $\alpha$ and $-\alpha$ of $5$ in $\mathbb{F}_{p^2}$ both lie in $\mathbb{F}_{p^2}\setminus\mathbb{F}_p$. Accordingly, the map $\sigma$ defined by $(\lambda\alpha+\mu)^\sigma=\lambda(-\alpha)+\mu$ for any $\lambda,\mu\in\mathbb{F}_p$ is a nontrivial element of $\Gal(\mathbb{F}_{p^2}/\mathbb{F}_p)$, and then it must coincide with the Frobenius automorphism of $\mathbb{F}_{p^2}$ as $\Gal(\mathbb{F}_{p^2}/\mathbb{F}_p)=2$. As a consequence, $\alpha^p=-\alpha$.

First assume $p\equiv1\pmod{3}$. Then
\begin{eqnarray*}
\left(\frac{-3}{p}\right)
&=&\left(\frac{-1}{p}\right)\left(\frac{3}{p}\right)=(-1)^{\frac{p-1}{2}}\left(\frac{3}{p}\right)\\
&=&(-1)^{\frac{p-1}{2}}\cdot(-1)^{\frac{3-1}{2}\cdot\frac{p-1}{2}}\left(\frac{p}{3}\right)=\left(\frac{p}{3}\right)=\left(\frac{1}{3}\right)=1,
\end{eqnarray*}
which means that $-3$ is a square in $\mathbb{F}_p$. It follows that $\beta\in\mathbb{F}_p$ and so $\beta^p=\beta$. This in conjunction with~\eqref{eq1} gives
$$
d^p=(\alpha+\beta)^p=\alpha^p+\beta^p=-\alpha+\beta=2(a+b+1)=\frac{4}{a-b}=\frac{-8}{\alpha+\beta}=\frac{-8}{d}.
$$
Therefore, $d^{p+1}=-8$, and then
$$
d^{\frac{p^2-1}{3}}=\left(d^{p+1}\right)^{\frac{p-1}{3}}=(-8)^{\frac{p-1}{3}}=\left((-2)^3\right)^{\frac{p-1}{3}}=(-2)^{p-1}=1.
$$

Next assume $p\equiv2\pmod{3}$. Then
\begin{eqnarray*}
\left(\frac{-3}{p}\right)
&=&\left(\frac{-1}{p}\right)\left(\frac{3}{p}\right)=(-1)^{\frac{p-1}{2}}\left(\frac{3}{p}\right)\\
&=&(-1)^{\frac{p-1}{2}}\cdot(-1)^{\frac{3-1}{2}\cdot\frac{p-1}{2}}\left(\frac{p}{3}\right)=\left(\frac{p}{3}\right)=\left(\frac{2}{3}\right)=-1,
\end{eqnarray*}
which means that $-3$ is a non-square in $\mathbb{F}_p$. Accordingly, $\beta\notin\mathbb{F}_p$ and hence $\beta^p\neq\beta$. This together with the observation $(\beta^p)^2=(\beta^2)^p=(-3)^p=-3$ leads to $\beta^p=-\beta$. It follows that $d^p=(\alpha+\beta)^p=\alpha^p+\beta^p=-\alpha-\beta=-d$, whence $d^{p-1}=-1$ as $d=-2(a-b)\neq0$ by~\eqref{eq1}. As a consequence,
$$
d^{\frac{p^2-1}{3}}={\left((d^{p-1})^2\right)^{\frac{p+1}{6}}}={\left((-1)^2\right)^{\frac{p+1}{6}}}=1,
$$
as desired.
\end{proof}

\begin{lemma}\label{lem-1}
The orders of $x,y,z$ and $w$ are $5,2,3$ and $4$, respectively,
$$
w=(x^2y)xyx(x^2y)^{-1}\in H\quad\text{and}\quad z=x^{-2}yxw^{-1}\in H.
$$
\end{lemma}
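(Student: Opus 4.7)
The plan is to verify each assertion by explicit matrix computation in $\GL_3(p^2)$ and then project to $\PGL_3(p^2)$, using the defining relations $a^2+a-1=0$ and $b^2+b+1=0$ (and their consequences $b^3=1$, since $b^3-1=(b-1)(b^2+b+1)=0$, and $(a-b)(a+b+1)=2$ from~\eqref{eq1}) to keep every entry written as a $\mathbb{Z}$-linear combination of $ab$, $a$, $b$, and $1$, exactly as the authors already do in~\eqref{eq4}--\eqref{eq7}.

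For the orders, a direct multiplication yields $y^2=\mathrm{diag}(b^{-2},b,b)$, which equals the scalar $bI$ once $b^{-2}$ is replaced by $b$ via $b^3=1$; hence $y^\varphi$ has order $2$. The signed cyclic permutation $z$ satisfies $z^3=I$ by direct inspection of its action on the standard basis ($ze_1=-e_2$, $ze_2=-e_3$, $ze_3=e_1$), so $z^\varphi$ has order $3$. For $x$, I use~\eqref{eq4} to compute $x^4=(x^2)^2$ and then $x^5=x^4\cdot x$, reducing entries via $a^2=1-a$ after each multiplication; the outcome is a scalar matrix. Since $x$ is visibly non-scalar and $5$ is prime, $|x^\varphi|=5$.

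To prove the identity $w=(x^2y)xyx(x^2y)^{-1}$, I would compute both sides and compare. The right-hand side is obtained by conjugating the matrix in~\eqref{eq7} by the matrix in~\eqref{eq5}, i.e., inverting the latter (its determinant is checked to be a nonzero scalar) and performing two matrix products, reducing after each step. The left-hand side $w=zgz^{-1}g^{-1}$ is computed directly from the definitions of $z$, $g$ and the formula~\eqref{eq2} for $g^{-1}$. Matching entries establishes both the identity and the membership $w\in H$, and $w$ is then conjugate to $xyx$ in $\PGL_3(p^2)$, so its order in $G$ is the order of the image of $xyx$. I would compute this order by forming $(xyx)^2$ from~\eqref{eq7} and $(xyx)^4=((xyx)^2)^2$, then checking that the former is non-scalar while the latter is scalar; this yields $|w|=4$.

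The identity $z=x^{-2}yxw^{-1}$ then reduces to one further matrix multiplication: $x^{-2}=(x^2)^{-1}$ is obtained by inverting~\eqref{eq4}, $yx$ is available from~\eqref{eq6}, and $w$ has just been computed, so multiplying $x^{-2}\cdot yx\cdot w^{-1}$ and comparing with $z$ completes the lemma and simultaneously gives $z\in H$. The main obstacle is not conceptual but computational: the intermediate products $x^4$, $x^5$, $(xyx)^2$, $(xyx)^4$ and $(x^2y)xyx(x^2y)^{-1}$ produce entries that are cubic or quartic in $a$ and $b$ before reduction, and a systematic rewriting against the $\mathbb{Z}$-basis $\{ab,a,b,1\}$ using $a^2+a-1=0$ and $b^2+b+1=0$ is what keeps the bookkeeping tractable by hand.
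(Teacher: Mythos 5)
Your proposal is correct and follows the same overall strategy as the paper---explicit matrix computation over $\mathbb{F}_{p^2}$, reducing every entry against $a^2+a-1=0$ and $b^2+b+1=0$ to a $\mathbb{Z}$-combination of $ab,a,b,1$---but it differs in two tactical choices, both of which cost you extra computation. For the orders of $x$ and $w$, the paper does not compute any powers beyond those already displayed: it computes the characteristic polynomial of a representative matrix, factors it as $(\lambda-2)(\lambda^2-2a\lambda+4)$ (a divisor of $\lambda^5-32$) for $x$ and as $(\lambda-2)(\lambda^2+4)$ (a divisor of $\lambda^4-16$) for $w$, and concludes that the representative is diagonalizable with distinct eigenvalues that are fifth (respectively fourth) roots of a scalar; this pins down the projective order from a single determinant rather than from the chain $x^4$, $x^5$ (respectively $(xyx)^2$, $(xyx)^4$) that you propose. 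For the two identities, the paper verifies them in the cross-multiplied forms $w(x^2y)=(x^2y)xyx$ and $x^2zw=yx$, so that only products of the already-computed matrices \eqref{eq3}--\eqref{eq7} are needed and no $3\times3$ inverse over $\mathbb{Z}[a,b]$ has to be formed; your route via $(x^2y)^{-1}$ and $w^{-1}$ is equally valid but requires adjugates and an extra scalar to track in $\PGL_3(p^2)$. One small point to make explicit if you carry out your version: since all identities live in $\PGL_3(p^2)$, ``matching entries'' must mean matching up to a nonzero scalar (as in the paper's final display, where the product equals $-2(b+1)$ times the representative of $yx$).
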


\begin{proof}
To calculate the order of $x$ we consider the characteristic polynomial $\chi(\lambda)$ of
$$
\begin{pmatrix}
a^{-1} & 1 & -a\\
-1 & a & -a^{-1}\\
-a & a^{-1} & 1
\end{pmatrix}
=
\begin{pmatrix}
a+1 & 1 & -a\\
-1 & a & -a-1\\
-a & a+1 & 1
\end{pmatrix}.
$$
Direct computation shows
\begin{eqnarray*}
\chi(\lambda)&=&\lambda^3-(2a+2)\lambda^2+(a^2+5a+3)\lambda-6a^2-6a-2\\
&=&\lambda^3-(2a+2)\lambda^2+(4a+4)\lambda-8\\
&=&(\lambda-2)(\lambda^2-2a\lambda+4)
\end{eqnarray*}
and then $\chi(\lambda)(\lambda^2+2a\lambda+2\lambda+4)=\lambda^5-32$. Therefore, $\chi(\lambda)$ has three distinct roots over $\overline{\mathbb{F}_p}$, and each of them is a $5$th root of $32$. Hence $x$ has order $5$.

It is evident that the orders of $y$ and $z$ are $2$ and $3$, respectively. Now we calculate the order of $w=zgz^{-1}g^{-1}$. In view of~\eqref{eq2} we have
\begin{eqnarray}\label{eq3}
w&=&
\begin{pmatrix}
0 & 0 & -1\\
1 & 0 & 0\\
0 & 1 & 0
\end{pmatrix}
^\varphi
\begin{pmatrix}
b^{-1} & 0 & 1\\
0 & a-b & 0\\
1 & 0 & -b
\end{pmatrix}
^\varphi
\begin{pmatrix}
0 & 1 & 0\\
0 & 0 & 1\\
-1 & 0 & 0
\end{pmatrix}
^\varphi g^{-1}\\\nonumber
&=&
\begin{pmatrix}
-b & -1 & 0\\
-1 & b^{-1} & 0\\
0 & 0 & a-b
\end{pmatrix}
^\varphi g^{-1}\\\nonumber
&=&
\begin{pmatrix}
-b & -1 & 0\\
-1 & b^{-1} & 0\\
0 & 0 & a-b
\end{pmatrix}
^\varphi
\begin{pmatrix}
b & 0 & 1\\
0 & a+b+1 & 0\\
1 & 0 & -b^{-1}
\end{pmatrix}
^\varphi\\\nonumber
&=&
\begin{pmatrix}
b+1 & -a-b-1 & -b\\
-b & -ab-a-b & -1\\
a-b & 0 & ab+a+1
\end{pmatrix}
^\varphi.
\end{eqnarray}
Let
$$
\omega(\lambda)=
\begin{vmatrix}
\lambda-b-1 & a+b+1 & b\\
b & \lambda+ab+a+b & 1\\
-a+b & 0 & \lambda-ab-a-1
\end{vmatrix}
$$
Then
\begin{eqnarray*}
\omega(\lambda)&=&\lambda^3-2\lambda^2-(a^2b^2+2a^2b+ab^2+a^2+2ab+3b^2+a+2b-1)\lambda\\
&&+a^2b^3+5a^2b^2+ab^3+5a^2b+5ab^2-b^3+5ab+3b^2+3b\\
&=&\lambda^3-2\lambda^2+4\lambda-8\\
&=&(\lambda-2)(\lambda^2+4).
\end{eqnarray*}
Thus $\omega(\lambda)$ has three distinct roots over $\mathbb{F}_{p^2}$, and each of them is a $4$th root of $16$. Therefore, $w$ has order $4$.

It remains to verify $w(x^2y)=(x^2y)xyx$ and $x^2zw=yx$. Writing each element of $\mathbb{Z}[a,b]$ as a linear combination of $ab,a,b$ and $1$ with coefficients in $\mathbb{Z}$, we obtain
\begin{eqnarray*}
&&
\begin{pmatrix}
b+1 & -a-b-1 & -b\\
-b & -ab-a-b & -1\\
a-b & 0 & ab+a+1
\end{pmatrix}
\begin{pmatrix}
b+1 & -ab-b & a\\
-ab-a & -b & -a-1\\
-ab-a-b-1 & -ab & 1
\end{pmatrix}\\
&=&
\begin{pmatrix}
-2a+2b & 0 & 2ab+2a+2\\
2b+2 & -2a-2b-2 & 2b\\
-2b & -2ab-2a-2b & 2
\end{pmatrix}\\
&=&
\begin{pmatrix}
b+1 & -ab-b & a\\
-ab-a & -b & -a-1\\
-ab-a-b-1 & -ab & 1
\end{pmatrix}
\begin{pmatrix}
ab+b+1 & ab+a+1 & 0\\
-1 & -b & ab+a+b\\
-b-1 & 1 & -ab+1
\end{pmatrix}.
\end{eqnarray*}
This together with~\eqref{eq5}, \eqref{eq7} and~\eqref{eq3} shows that $w(x^2y)=(x^2y)xyx$. Similarly, one combines~\eqref{eq4}, \eqref{eq3}, \eqref{eq6} and the equality
\begin{eqnarray*}
&&
\begin{pmatrix}
-1 & -a & a+1\\
a & a+1 & 1\\
a+1 & -1 & a
\end{pmatrix}
\begin{pmatrix}
0 & 0 & -1\\
1 & 0 & 0\\
0 & 1 & 0
\end{pmatrix}
\begin{pmatrix}
b+1 & -a-b-1 & -b\\
-b & -ab-a-b & -1\\
a-b & 0 & ab+a+1
\end{pmatrix}\\
&=&
\begin{pmatrix}
-2ab-2b & -2b & 2ab\\
2ab+2a & -2ab-2a-2b-2 & -2b-2\\
-2 & 2a & -2a-2
\end{pmatrix}\\
&=&
-2(b+1)
\begin{pmatrix}
ab+a+b+1 & b+1 & -ab-a\\
-a & a+1 & 1\\
-b & ab & -ab-b
\end{pmatrix}
\end{eqnarray*}
to see that $x^2zw=yx$. Hence the lemma is true.
\end{proof}

\begin{lemma}\label{lem-3}
$H$ is a maximal subgroup of $G$ isomorphic to $\A_6$.
\end{lemma}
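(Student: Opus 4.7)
The plan has two halves: establish $H\cong \A_6$, then use the classification of maximal subgroups of $\PSL_3(p^2)$ to conclude $H$ is maximal in $G$.

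For the isomorphism $H\cong \A_6$, the strategy is to realise $H$ as a quotient of $\A_6$ via a two-generator presentation and then invoke the simplicity of $\A_6$. I would use a presentation of the form $\A_6 \cong \langle r, s \mid r^5, s^2, (rs)^{e_1}, \ldots\rangle$, selected so that the relations already available from Lemma~\ref{lem-1} (namely $x^5=y^2=1$, plus the orders of the explicit elements $w$ and $z$ inside $H$) supply most of the defining relations. Since $w=(x^2y)(xyx)(x^2y)^{-1}$ has order~$4$, the word $xyx$ in the generators has order~$4$, and since $z=x^{-2}yxw^{-1}$ has order~$3$, another short relation is pinned down. Any remaining relations would be verified by direct matrix computation in the same style as Lemmas~\ref{lem-2} and~\ref{lem-1}, reducing entries via $a^2+a-1=0$ and $b^2+b+1=0$. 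The resulting surjection $\A_6\twoheadrightarrow H$ has nontrivial image (since $x\in H$ has order~$5$), and by the simplicity of $\A_6$ it is an isomorphism.

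For the maximality of $H$ in $G$, I would appeal to Mitchell's classification of the subgroups of $\PSL_3(q)$, in the form tabulated in~\cite{BHR2013}. Any proper overgroup $M$ with $H\leqslant M<G$ must fall into a short list of candidates: reducible subgroups (point- or line-stabilisers); imprimitive subgroups (normalisers of split tori); Singer-cycle normalisers; the subfield subgroup $\PSL_3(p)$; the unitary subgroup $\PSU_3(p)$; and almost simple overgroups of $\A_6$ such as $\Sy_6$ or $\M_{10}$. The reducible, imprimitive and Singer-cycle candidates are ruled out on structural grounds, since they cannot contain the nonabelian simple group $\A_6$ (each is either solvable or has only two-dimensional nonabelian composition factors). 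The subfield and unitary candidates are excluded by the congruence $p\equiv\pm 2\pmod 5$, which forces $5\nmid|\PSL_3(p)|$ and $5\nmid|\PSU_3(p)|$, contradicting the presence of $x\in H$ of order~$5$. The remaining almost-simple overgroup cases reduce to a representation-theoretic check: any such $M$ would have to carry a faithful three-dimensional projective representation landing in $\PSL_3(p^2)$, ruling out $\Sy_6$ and $\M_{10}$ in this setting.

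The main technical obstacle I foresee is the presentation-matching step: one must choose an efficient two-generator presentation of $\A_6$ in which every relation beyond $x^5=y^2=1$ either follows from Lemma~\ref{lem-1} or requires only one additional matrix computation. The auxiliary elements $z$ and $w$ introduced in Lemma~\ref{lem-1} are almost certainly engineered precisely to make this verification succinct, by giving short words in $x$ and $y$ of prescribed small orders. The subsequent maximality analysis is essentially a checklist against the known list of maximal subgroups of $\PSL_3(p^2)$, and each exclusion reduces either to an order argument using the hypothesis $p\equiv\pm 2\pmod 5$ or to a standard representation-theoretic fact about $\A_6$.
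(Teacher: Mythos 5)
Your overall architecture matches the paper's. For the first half, the paper does exactly what you anticipate: it uses the presentation $\langle X,Y\mid X^5=Y^2=(XY)^5=(XYX)^4=1\rangle\cong\A_6$, obtains $x^5=y^2=(xyx)^4=1$ from Lemma~\ref{lem-1} (via the conjugate $w=(x^2y)xyx(x^2y)^{-1}$ of order $4$), verifies the one remaining relation $(xy)^5=1$ by a characteristic-polynomial computation for $yx$, and concludes by simplicity of $\A_6$. Your order-based exclusion of the subfield and unitary candidates is correct and arguably cleaner than the paper's table inspection: $p\equiv\pm2\pmod 5$ ensures $5$ divides none of $p^2-1$, $p^3-1$, $p^3+1$, hence $5\nmid|\PSL_3(p)|$ and $5\nmid|\PSU_3(p)|$, while $|\A_6|=360$.

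The gap is in your checklist of maximal subgroups of $\PSL_3(p^2)$: for odd $q$ the group $\PSL_3(q)$ has a class of maximal subgroups $\SO_3(q)\cong\PGL_2(q)$ (the $\mathcal{C}_8$ orthogonal class), which you omit entirely --- it is neither reducible, imprimitive, a Singer normaliser, a subfield or unitary subgroup, nor an almost simple group with socle $\A_6$. Ruling it out requires knowing that $\A_6$ does not embed in $\PGL_2(p^2)$, equivalently (since $\A_6$ is simple and has no index-$2$ subgroup) in $\PSL_2(p^2)$; by Dickson's classification of the subgroups of $\PSL_2$ this holds precisely because $p>3$, since $\A_6\cong\PSL_2(9)$ lies in $\PSL_2(q)$ exactly when $9$ divides $q$. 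This is the step for which the paper cites Huppert. The same fact is silently assumed in your dismissal of the parabolic subgroups: their unique nonabelian composition factor is $\PSL_2(p^2)$, and a simple subgroup must embed in some composition factor of its overgroup, so ``only two-dimensional nonabelian composition factors'' excludes $\A_6$ only once $\A_6\not\leq\PSL_2(p^2)$ is known. (The solvable $\mathcal{C}_6$ class $3^2{:}\Q_8$ is also missing from your list, but is harmless for order reasons.) With the Dickson fact supplied, your argument closes up and coincides with the paper's.
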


\begin{proof}
By~\eqref{eq6}, $yx$ is the image of
$$
\begin{pmatrix}
ab+a+b+1 & b+1 & -ab-a\\
-a & a+1 & 1\\
-b & ab & -ab-b
\end{pmatrix}
$$
under $\varphi$. Let
$$
\psi(\lambda)=
\begin{vmatrix}
\lambda-ab-a-b-1 & -b-1 & ab+a\\
a & \lambda-a-1 & -1\\
b & -ab & \lambda+ab+b
\end{vmatrix}.
$$
We have
\begin{eqnarray*}
\psi(\lambda)&=&\lambda^3-(2a+2)\lambda^2-(a^2b^2+a^2b+3ab^2-a^2+3ab+b^2-3a+b-1)\lambda\\
&&+6a^2b^2+6a^2b+6ab^2+6ab+2b^2+2b\\
&=&\lambda^3-(2a+2)\lambda^2+(4a+4)\lambda-8\\
&=&(\lambda-2)(\lambda^2-2a\lambda+4),
\end{eqnarray*}
whence $\psi(\lambda)(\lambda^2+2a\lambda+2\lambda+4)=\lambda^5-32$. Therefore, $\psi(\lambda)$ has three distinct roots over $\overline{\mathbb{F}_p}$, and each of them is a $5$th root of $32$. It follows that $(yx)^5=1$ and thus $(xy)^5=1$. Recall from Lemma~\ref{lem-1} that $x,y$ and $w=(x^2y)xyx(x^2y)^{-1}$ are of order $5,2$ and $4$, respectively. Accordingly, $x^5=y^2=(xyx)^4=1$. Now $H$ is a factor group of
$$
\langle X,Y\mid X^5=Y^2=(XY)^5=(XYX)^4=1\rangle,
$$
which is isomorphic to the finite simple group $\A_6$. Hence $H=\langle x,y\rangle\cong\A_6$. This in turn forces $H\leqslant G$ since $H$ is nonabelian simple and $G$ is a normal subgroup of index three in $\PGL_3(p^2)$.

Recall that $p>3$ and $p\equiv\pm2\pmod{5}$. According to the classification of subgroups of $\PSL_2(p^2)$ (see for example~\cite{Huppert1967}), $\PSL_2(p^2)$ has no subgroup isomorphic to $\A_6$. Then inspecting the list of maximal subgroups of $\PSL_3(p)$ and $\PSU_3(p)$ (see for
example~\cite{BHR2013}) we know that neither $\PSL_3(p)$ nor $\PSU_3(p)$ has a subgroup isomorphic to $\A_6$. Let $M$ be a maximal subgroup of $G=\PSL_3(p^2)$ containing $H$. From the list of maximal subgroups of G we deduce that either $M\cong\A_6$ or $H$ is contained in $\PSL_3(p)$ or $\PSU_3(p)$. Thus we have $M\cong\A_6$, which means that $H=M$ is a maximal subgroup of $G$. This completes the proof.
\end{proof}

\begin{lemma}\label{lem-4}
$H=(H\cap g^{-1}Hg)(gHg^{-1}\cap H)$ with $H\cap g^{-1}Hg\cong\A_5$.
\end{lemma}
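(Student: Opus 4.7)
The plan is to identify $H \cap g^{-1}Hg$ as an $\A_5$ sitting inside $H$, using the commutator identity $w = zgz^{-1}g^{-1} \in H$ from Lemma~\ref{lem-1} as the entry point. Rearranging gives $gzg^{-1} = w^{-1}z \in H$, so $z \in H \cap g^{-1}Hg$. For the upper bound, Lemma~\ref{lem-3} shows $H \cong \A_6$ is maximal in $G = \PSL_3(p^2)$, and since $H$ is not normal in $G$, we have $\Nor_G(H) = H$. The hypothesis $g^{-1} \notin HgH$ forces $g \notin H$ (else $g^{-1} \in H \subseteq HgH$), so $g \notin \Nor_G(H)$ and $H \cap g^{-1}Hg \lneq H$.

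To pin down the isomorphism type, I would perform an additional matrix computation in the style of Lemma~\ref{lem-1} to produce a second element of $H \cap g^{-1}Hg$ of order $5$, by checking that the conjugate of some suitable element of order $5$ by $g$ is again expressible as a word in $x$ and $y$. The subgroup of $H$ generated by this element together with $z$ has order divisible by $15$, and the only such subgroup orders in $\A_6$ are $60$ and $360$. Since $H \cap g^{-1}Hg$ is proper in $H$, the intersection must be $\A_5$.

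For the factorization, Lemma~\ref{Factorization}(d) reduces the statement to $|H \cap g^{-1}Hg \cap gHg^{-1}| = 60 \cdot 60 / 360 = 10$. The expected triple intersection is a $\D_{10}$---the Sylow-$5$ normalizer of the $\A_5$ obtained above---and I would exhibit its generators inside both $g^{-1}Hg$ and $gHg^{-1}$ by a further round of explicit matrix calculation. The matching upper bound follows from the subgroup lattice of $\A_5$: proper subgroups of $\A_5$ have order at most $12$, and the only such order admitting a $\D_{10}$ is $10$ itself. Properness of the triple intersection in the $\A_5$ can be certified by exhibiting any element of $H \cap g^{-1}Hg$ that fails to lie in $gHg^{-1}$, which will fall out of the same matrix bookkeeping.

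The main obstacle is the computational identification of enough explicit elements in $H \cap g^{-1}Hg$ and in the triple intersection to anchor the structural arguments above. The matrix work---products of three or four $3 \times 3$ matrices over $\mathbb{F}_{p^2}$, with entries reduced using $a^2+a-1=0$ and $b^2+b+1=0$---follows the tedious but routine template already displayed in Lemma~\ref{lem-1}, and the structural ingredients (maximality of $H$ in $G$, maximality of $\A_5$ in $\A_6$, and the order equation of Lemma~\ref{Factorization}(d)) then combine to yield both claims of the lemma at once.
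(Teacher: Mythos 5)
There is a genuine gap, and it comes in two parts. First, your step ``the hypothesis $g^{-1}\notin HgH$ forces $g\notin H$'' is circular: in the logical structure of the paper, $g^{-1}\notin HgH$ is not a hypothesis but a \emph{conclusion} of Theorem~\ref{exa-PSL(3,p^2)}, derived from the nontrivial factorization of this very lemma via Lemma~\ref{Directed}. You must prove $g\notin H$ independently. The paper does this by first showing (by direct matrix computation) that $gx=xg$, so that if $g\in H$ then $g$ centralizes the order-$5$ subgroup $\langle x\rangle$, which is self-centralizing in $\A_6$; this would force $\langle g\rangle=\langle x\rangle$, contradicting the visible block form of powers of $g$. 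That single identity $gx=xg$ is the linchpin of the whole lemma, and your proposal never identifies it.

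Second, essentially all of the content of the lemma is deferred to unspecified ``further matrix bookkeeping'': producing an order-$5$ element of $H\cap g^{-1}Hg$, exhibiting a $\D_{10}$ inside the triple intersection, and certifying that $H\cap g^{-1}Hg\neq gHg^{-1}\cap H$. None of these extra computations is actually needed. Once you know $gx=xg$, the element $x$ lies in $(H\cap g^{-1}Hg)\cap(gHg^{-1}\cap H)$ for free; together with $z\in H\cap g^{-1}Hg$ (your correct observation) and properness, the divisible-by-$15$ argument gives $H\cap g^{-1}Hg\cong\A_5$. Distinctness of the two $\A_5$'s follows structurally, not computationally: if they were equal then $z\in gHg^{-1}$, hence $w=z(gz^{-1}g^{-1})\in gHg^{-1}\cap H\cong\A_5$, but $w$ has order $4$ and $\A_5$ has no such element. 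Finally, two distinct $\A_5$'s in $\A_6$ meet in a subgroup of order $10$ or $12$, and the presence of the order-$5$ element $x$ in the intersection forces order $10$, which is exactly what Lemma~\ref{Factorization}(d) requires. Your overall skeleton (identify the $\A_5$, show the two intersections are distinct, compute the triple intersection to be of order $10$) matches the paper, but as written the proposal both rests on a circular premise and omits the one concrete computation that makes the argument close.
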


\begin{proof}
Recall from Lemma~\ref{lem-1} that $x,y,z$ and $w$ are elements of $H$ of order $5,2,3$ and $4$, respectively. Also, Lemma~\ref{lem-3} asserts that $H\cong\A_6$ is maximal in $G$. Hence $\Nor_G(H)=H$, for $G$ is a simple group. From $a^2+a-1=0$ and $b^2+b+1=0$ we deduce that $a^{-1}+b^{-1}=a-b$ and $1-ab^{-1}=ab+a^{-1}$. Thereby we have
\begin{eqnarray*}
gx&=&
\begin{pmatrix}
b^{-1} & 0 & 1\\
0 & a-b & 0\\
1 & 0 & -b
\end{pmatrix}
^\varphi
\begin{pmatrix}
a^{-1} & 1 & -a\\
-1 & a & -a^{-1}\\
-a & a^{-1} & 1
\end{pmatrix}
^\varphi
\\
&=&
\begin{pmatrix}
a^{-1}b^{-1}-a & a^{-1}+b^{-1} & 1-ab^{-1}\\
-a+b & a^2-ab & -1+a^{-1}b\\
ab+a^{-1} & 1-a^{-1}b & -a-b
\end{pmatrix}
^\varphi\\
&=&
\begin{pmatrix}
a^{-1}b^{-1}-a & a-b & ab+a^{-1}\\
-a^{-1}-b^{-1} & a^2-ab & -1+a^{-1}b\\
1-ab^{-1} & 1-a^{-1}b & -a-b
\end{pmatrix}
^\varphi\\
&=&
\begin{pmatrix}
a^{-1} & 1 & -a\\
-1 & a & -a^{-1}\\
-a & a^{-1} & 1
\end{pmatrix}
^\varphi
\begin{pmatrix}
b^{-1} & 0 & 1\\
0 & a-b & 0\\
1 & 0 & -b
\end{pmatrix}
^\varphi=xg.
\end{eqnarray*}
As a consequence, $x\in H\cap g^{-1}Hg\cap gHg^{-1}=(H\cap g^{-1}Hg)\cap(gHg^{-1}\cap H)$.

Suppose $g\in H$. Then since $g$ is a nontrivial element centralizing $x$ and any subgroup of order five in $\A_6$ is self-centralizing, we derive that $\langle g\rangle=\langle x\rangle$. However, the fact that any element of $\langle g\rangle$ has form
$$
\begin{pmatrix}
* & 0 & *\\
0 & * & 0\\
* & 0 & *
\end{pmatrix}
^\varphi,
$$
implies $x\notin\langle g\rangle$, a contradiction. This shows that $g\notin H$, whence $H\neq g^{-1}Hg$ as $\Nor_G(H)=H$. Since $w=zgz^{-1}g^{-1}\in H$, we have $gzg^{-1}=w^{-1}z\in H$ and thus $z\in g^{-1}Hg$. Now $H\cap g^{-1}Hg$ is a proper subgroup of $H\cong\A_6$ that contains an element $x$ of order five and an element $z$ of order three. We conclude that $H\cap g^{-1}Hg\cong\A_5$.

If $H\cap g^{-1}Hg=gHg^{-1}\cap H$, then $z\in gHg^{-1}\cap H$ due to $z\in H\cap g^{-1}Hg$. But this yields $w=z(gz^{-1}g^{-1})\in gHg^{-1}$, contrary to the fact that $gHg^{-1}\cap H=g(H\cap g^{-1}Hg)g^{-1}\cong\A_5$ has no element of order four. Therefore, $H\cap g^{-1}Hg\neq gHg^{-1}\cap H$. Note that the intersection of any two distinct subgroups of $\A_6$ that are both isomorphic to $\A_5$ has order either $10$ or $12$. We infer from $x\in(H\cap g^{-1}Hg)\cap(gHg^{-1}\cap H)$ that $|(H\cap g^{-1}Hg)\cap(gHg^{-1}\cap H)|=10$. Hence $H=(H\cap g^{-1}Hg)(gHg^{-1}\cap H)$, completing the proof.
\end{proof}

\textit{Proof of Theorem~$\ref{exa-PSL(3,p^2)}$.} From Lemmas~\ref{lem-2}--\ref{lem-4} we know that $g$ is an element of $G$, $H$ is a maximal subgroup of $G$ isomorphic to $\A_6$, and $H=(H\cap g^{-1}Hg)(gHg^{-1}\cap H)$ is a nontrivial factorization of $H$ with $H\cap g^{-1}Hg\cong\A_5$. As a consequence, Lemma~\ref{Directed} implies that $g^{-1}\notin HgH$. Thus by Lemma~\ref{CosetDigraph}, $\Cos(G,H,g)$ is a vertex-primitive $2$-arc-transitive $6$-regular digraph. Let $A$ be the group of automorphisms of $\Cos(G,H,g)$ and $R_H(G)$ be the subgroup of $A$ induced by the right multiplication of $G$. We have $R_H(G)\cong G$ since $G$ is simple, and $A$ does not contain the alternating group $\A_{|G{:}H|}$ since $\Cos(G,H,g)$ is not a complete graph. Since $A$ contains the primitive group $R_H(G)$, \cite{LPS1987} implies that $A$ has socle $R_H(G)\cong\PSL_3(p^2)$.

Let $\gamma$ and $\phi$ be field and graph automorphisms of $\PSL_3(p^2)$ of order two such that $[\gamma,\phi]=1$, and let $v$ and $w$ be the two vertices $H$ and $Hg$ respectively of $\Cos(G,H,g)$, so that $v\rightarrow w$. The fact that $R_H(G)_v=R_H(H)$ is maximal in $R_H(G)$ of index $|G{:}H|$ implies that the vertex stabilizer $A_v$ is maximal in $A$ of index $|G{:}H|$. Since $A_v\geqslant R_H(G)_v\cong\A_6$, it follows from~\cite[Table~8.4]{BHR2013} that $\PSL_3(p^2)\leqslant A\leqslant\PSL_3(p^2){:}\langle\gamma,\phi\rangle$ and $\A_6\lesssim A_v\lesssim\PGaL_2(9)$. Moreover, the fact that $R_H(G)_{vw}=R_H(H\cap g^{-1}Hg)$ is maximal in $R_H(G)_v$ of index $6$ implies that $A_{vw}$ is maximal in $A_v$ of index $6$. Hence $\A_6\lesssim A_v\lesssim\Sy_6$, and so~\cite[Table~8.4]{BHR2013} again implies that $\PSL_3(p^2)\leqslant A\leqslant X$, where
$$
X=
\begin{cases}
\PSL_3(p^2){:}\langle\gamma\phi\rangle\quad&\text{if $p\equiv1\pmod{3}$}\\
\PSiL_3(p^2)\quad&\text{if $p\equiv2\pmod{3}$}.
\end{cases}
$$

Let $\overline{H}$ and $\overline{K}$ be the full preimages of $H$ and $H\cap g^{-1}Hg$ in $\SL_3(p^2)$. Then by~\cite[Table~8.4]{BHR2013} we have $\overline{H}=3{}^\cdot\A_6$ and so $\overline{K}=3\times\A_5$. Since $\A_5$ has no irreducible representation of dimension $2$ over any field of characteristic $p$ (see~\cite[Table~8.1]{BHR2013}), we deduce that $\overline{K}$ is an irreducible subgroup of $\SL_3(p^2)$. Thus by Schur's lemma we have $\Cen_G(H\cap g^{-1}Hg)=1$, and so
$$
\Nor_G(H\cap g^{-1}Hg)\cong\Nor_G(H\cap g^{-1}Hg)/\Cen_G(H\cap g^{-1}Hg)\lesssim\Aut(H\cap g^{-1}Hg)=\Sy_5.
$$
Since $\Sy_5$ has no irreducible representation of dimension $3$ over any field of characteristic $p$, it follows that
$$
\Nor_G(H\cap g^{-1}Hg)=H\cap g^{-1}Hg\cong\A_5.
$$
Let $n$ be the number of conjugates of $H$ in $G$ that contain $H\cap g^{-1}Hg$. Note that $H\cong\A_6$ has $12$ distinct subgroups isomorphic to $\A_5$. By counting the number of pairs $(N_1,N_2)$ such that $N_1$ is conjugate to $H$ in $G$ and $N_1>N_2\cong\A_5$, one obtains
$$
\frac{|G|}{|\Nor_G(H)|}\cdot12=\frac{|G|}{|\Nor_G(H\cap g^{-1}Hg)|}\cdot n.
$$
Accordingly, $H\cap g^{-1}Hg$ is contained in exactly
$$
n=\frac{12|\Nor_G(H\cap g^{-1}Hg)|}{|\Nor_G(H)|}=\frac{12|\A_5|}{|\A_6|}=2
$$
subgroups of $G$ that are conjugate to $H$. Hence $H$ and $g^{-1}Hg$ are the only conjugates of $H$ in $G$ that contains $H\cap g^{-1}Hg$. It follows that $v$ and $w$ are the only vertices fixed by $H\cap g^{-1}Hg$, and $u$ and $v$ are the only vertices fixed by $H\cap gHg^{-1}$. Thus $H$ has exactly $2$ orbits of length $6$. Let $N=\Nor_X(H)\cong\Sy_6$ (see~\cite[Table~8.4]{BHR2013}). Then $N$ either fixes each $H$-orbit of length $6$ or interchanges them. Since $H\cap g^{-1}Hg$ and $H\cap gHg^{-1}$ are not conjugate in $\Sy_6$, it follows that $N$ fixes each $H$-orbit. Therefore, $A=X$ and the proof is complete.

\begin{remark}
Note that the normalizer $M$ of $H$ in $\PSL_3(p^2){:}\langle\gamma,\phi\rangle$ is isomorphic to $\PGaL_2(9)$, which interchanges the two subgroups $H\cap g^{-1}Hg$ and $H\cap gHg^{-1}$. Since $H$ has only $2$ orbits of length $6$, it follows that $M$ interchanges the $2$ orbits of length $6$. Thus the underlying graph of the digraph $\Cos(G,H,g)$ admits $\PSL_3(p^2){:}\langle\gamma,\phi\rangle$ as an arc-transitive group of automorphisms. Consequently, the underlying graph of $\Cos(G,H,g)$ is not half-arc-transitive of valency $12$. It is shown in~\cite{FGLPRV2016} that $12$ is the smallest possible valency for a vertex-primitive half-arc-transitive graph and one infinite family of examples was given.
\end{remark}

\noindent\textsc{Acknowledgements.}~ This research was supported by Australian Research Council grant DP150101066. The authors would like to thank the anonymous referees for helpful comments.

\end{document}